\newtheorem{theorem}{Theorem}
\newtheorem{definition}[theorem]{Definition}
\newtheorem{lemma}[theorem]{Lemma}
\newtheorem{remark}[theorem]{Remark}
\newenvironment{proof}[1][Proof]{\noindent\textbf{#1.} }{\ \rule{0.5em}{0.5em}}
\begin{document}

\title{A general stochastic maximum principle for mixed relaxed-singular control
problems\thanks{This work is partially supported by Algerian-French
cooperation, Tassili 07 MDU 705.}}
\author{Seid Bahlali\\{\small Laboratory of Applied Mathematics}\\{\small \ University Med Khider}\\{\small \ \ Po. Box 145, Biskra 07000, Algeria}\\{\small \ Email : sbahlali@yahoo.fr}}
\maketitle

\begin{abstract}
We consider in this paper, mixed relaxed-singular stochastic control problems,
where the control variable has two components, the first being measure-valued
and the second singular. The control domain is not necessarily convex and the
system is governed by a nonlinear stochastic differential equation, in which
the measure-valued part of the control enters both the drift and the diffusion
coefficients. We establish necessary optimality conditions, of the Pontryagin
maximum principle type, satisfied by an optimal relaxed-singular control,
which exist under general conditions on the coefficients. The proof is based
on the strict singular stochastic maximum principle established by
Bahlali-Mezerdi, Ekeland's variational principle and some stability properties
of the trajectories and adjoint processes with respect to the control variable.

\ 

\textbf{AMS Subject Classification}\textit{. }93Exx

\ 

\textbf{Keywords}\textit{. }Stochastic differential
equation,\ relaxed-singular control, optimal control, maximum principle,
adjoint equation, {variational inequality, variational principle.}

\end{abstract}

\section{Introduction}

In this paper we study a stochastic control problems of nonlinear systems,
where the control variable has two components, the first being measure-valued
and the second singular. The system is governed by a stochastic differential
equation (SDE\ for short) of the type
\[
\left\{
\begin{array}
[c]{l}%
dx_{t}^{q}=\int_{A_{1}}b\left(  t,x_{t}^{q},a\right)  q_{t}\left(  da\right)
dt+\int_{A_{1}}\sigma\left(  t,x_{t}^{q},a\right)  q_{t}\left(  da\right)
dW_{t}+G_{t}d\eta_{t},\\
x_{0}^{q}=x_{0}.
\end{array}
\right.
\]
where $b,\sigma$ and $G$ are given deterministic functions, $x_{0}$ is the
initial data and $W=\left(  W_{t}\right)  _{t\geq0}$ is a $d$-dimensional
standard Brownian motion, defined on a filtered probability space $\left(
\Omega,\mathcal{F},\left(  \mathcal{F}_{t}\right)  _{t\geq0},\mathcal{P}%
\right)  ,$ satisfying the usual conditions. The control variable is a
suitable process $\left(  q,\eta\right)  $\ where $q:\left[  0,T\right]
\times\Omega\longrightarrow\mathbb{P}\left(  A_{1}\right)  $, $\eta:\left[
0,T\right]  \times\Omega\longrightarrow A_{2}=\left(  [0,\infty)\right)  ^{m}%
$\ are $B\left[  0,T\right]  \otimes\mathcal{F}$-measurable, $\left(
\mathcal{F}_{t}\right)  $- adapted, and $\eta$\ is an increasing process
(componentwise), continuous on the left with limits on the right with
$\eta_{0}=0$.

The pair $\left(  q,\eta\right)  $ is called mixed relaxed-singular control
(relaxed control for short) and we denote by $\mathcal{R}$ the class of
relaxed controls.

The functional cost, to be minimized over $\mathcal{R}$, has the form%
\[
J\left(  q,\eta\right)  =\mathbb{E}\left[  g\left(  x_{T}^{q}\right)
+\int_{0}^{T}\int_{A_{1}}h\left(  t,x_{t}^{q},a\right)  q_{t}\left(
da\right)  dt+\int_{0}^{T}l_{t}d\eta_{t}\right]  .
\]

A relaxed control $\left(  \mu,\xi\right)  $ is called optimal if it satisfies%
\[
J\left(  \mu,\xi\right)  =\inf\limits_{(q,\eta)\in\mathcal{R}}J(q,\eta).
\]

Singular control problems have been studied by many authors including
Ben\u{e}s-Shepp-Witsenhausen $\left[  5\right]  ,$ Chow-Menaldi-Robin $\left[
8\right]  ,$ Karatzas-Shreve $\left[  18\right]  ,$ Davis-Norman $\left[
9\right]  ,$ Haussmann-Suo $\left[  14,15,16\right]  .$ See $\left[
15\right]  $ for a complete list of references on the subject. The approaches
used in these papers, to solve the problem are mainly based on dynamic
programming. It was shown in particular that the value function is solution of
a variational inequality, and the optimal state is a reflected diffusion at
the free boundary. Note that in $\left[  14\right]  ,$ the authors apply the
compactification method to show existence of an optimal relaxed-singular control.

The other major approach to solve control problems is to derive necessary
conditions satisfied by some optimal control, known as the stochastic maximum
principle. The first version of the stochastic maximum principle that covers
singular control problems was obtained by Cadenillas-Haussmann $\left[
7\right]  $, in which they consider linear dynamics, convex cost criterion and
convex state constraints. necessary optimality conditions for non linear SDEs
were obtained by Bahlali-Chala $\left[  1\right]  $ and Bahlali-Mezerdi
$\left[  2\right]  .$

The common fact in this works is that an optimal strict singular control does
not necessarily exist, the set $\mathcal{U}$\ of strict singular controls
$\left(  v,\eta\right)  $, where $v:\left[  0,T\right]  \times\Omega
\longrightarrow A_{1}\subset\mathbb{R}^{k}$, is too narrow and not being
equipped with a good topological structure. The idea is then to introduce the
class $\mathcal{R}$\ of relaxed controls in which the controller chooses at
time $t$, a probability measure $q_{t}\left(  da\right)  $\ on the set $A_{1}%
$, rather than an element $v_{t}$\ of $A_{1}$. The relaxed control problem
find its interest in two essential points. The first is that it is a
generalization of the strict control problem, indeed if $q_{t}\left(
da\right)  =\delta_{v_{t}}\left(  da\right)  $ is a Dirac measure concentrated
at a single point $v_{t}$, then we get a strict control problem as a
particular case of the relaxed one. The second is that an optimal relaxed
control exists.

Stochastic maximum principle for relaxed controls ( without the singular part)
was obtained by Mezerdi-Bahlali $\left[  21\right]  $ in the case of
uncontrolled diffusion and Bahlali-Mezerdi-Djehiche $\left[  3\right]  $ where
the drift and the diffusion coefficients depends explicitly on the relaxed
control variable. Necessary optimality conditions for relaxed-singular
controls and uncontrolled diffusion are derived by Bahlali-Djehiche-Mezerdi
$\left[  4\right]  $.

Our main goal in this paper, is to establish a maximum principle for
relaxed-singular controls, where the first part of the control is a
measure-valued process and enters both the drift and the diffusion
coefficients. This leads to necessary optimality conditions satisfied by an
optimal relaxed control, which exists under general conditions on the
coefficients ( see $\left[  14\right]  $). To achieve this goal, we use the
maximum principle for strict singular controls established by Bahlali-Mezerdi
$\left[  2\right]  $ and Ekeland's variational principle. We are able to prove
necessary conditions for near optimality satisfied by a sequence of strict
controls, converging in some sense to the relaxed optimal control, by the so
called chattering lemma. The relaxed maximum principle is then derived by
using some stability properties of the trajectories and the adjoint processes
with respect to the control variable.

This result generalizes at the same time the results of Bahlali-Mezerdi
$\left[  2\right]  $, Bahlali-Mezerdi-Djehiche $\left[  3\right]  $ and
Bahlali-Djehiche-Mezerdi $\left[  4\right]  $. We note that the result of
$\left[  2\right]  $ and $\left[  3\right]  $ are the extensions of the Peng's
stochastic maximum principle $\left[  22\right]  $ respectively to the
singular and relaxed controls.

The paper is organized as follows. In Section 2, we formulate the strict and
relaxed control problems and give the various assumptions used throughout the
paper. Section 3 is devoted to the proof of the main approximation. In Section
4, we establish the near stochastic maximum principle. In the last Section, we
state and prove the main result of this paper, which is the stochastic maximum
principle for relaxed-singular controls.

\ 

Along this paper, we denote by $C$ some positive constant and for simplicity,
we need the following matrix notations. We denote by $\mathcal{M}_{n\times
d}\left(  \mathbb{R}\right)  $ the space of $n\times d$ real matrix and
$\mathcal{M}_{n\times n}^{d}\left(  \mathbb{R}\right)  $ the linear space of
vectors $M=\left(  M_{1},...,M_{d}\right)  $ where $M_{i}\in\mathcal{M}%
_{n\times n}\left(  \mathbb{R}\right)  $. For any $M,N\in\mathcal{M}_{n\times
n}^{d}\left(  \mathbb{R}\right)  $, $L,S\in\mathcal{M}_{n\times d}\left(
\mathbb{R}\right)  $, $Q\in\mathcal{M}_{n\times n}\left(  \mathbb{R}\right)
$, $\alpha,\beta\in\mathbb{R}^{n}$ and $\gamma\in\mathbb{R}^{d},$ we use the
following notations

$\alpha\beta=%
{\displaystyle\sum\limits_{i=1}^{n}}
\alpha_{i}\beta_{i}\in\mathbb{R}$ is the product scalar in $\mathbb{R}^{n}$;

$LS=%
{\displaystyle\sum\limits_{i=1}^{d}}
L_{i}S_{i}\in\mathbb{R}$, where $L_{i}$ and\ $S_{i}$ are the $i^{th}$ columns
of $L$ and $S;$

$ML=%
{\displaystyle\sum\limits_{i=1}^{d}}
M_{i}L_{i}\in\mathbb{R}^{n}$;

$M\alpha\gamma=\sum\limits_{i=1}^{d}\left(  M_{i}\alpha\right)  \gamma_{i}%
\in\mathbb{R}^{n}$;

$MN=%
{\displaystyle\sum\limits_{i=1}^{d}}
M_{i}N_{i}\in\mathcal{M}_{n\times n}\left(  \mathbb{R}\right)  $;

$MQN=%
{\displaystyle\sum\limits_{i=1}^{d}}
M_{i}QN_{i}\in\mathcal{M}_{n\times n}\left(  \mathbb{R}\right)  $;

$MQ\gamma=%
{\displaystyle\sum\limits_{i=1}^{d}}
M_{i}Q\gamma_{i}\in\mathcal{M}_{n\times n}\left(  \mathbb{R}\right)  $.

We denote by $L^{\ast}$ the transpose of the matrix $L$ and $M^{\ast}=\left(
M_{1}^{\ast},...,M_{d}^{\ast}\right)  $.

\section{Formulation of the problem}

Let $\left(  \Omega,\mathcal{F},\left(  \mathcal{F}_{t}\right)  _{t\geq
0},\mathcal{P}\right)  $ be a filtered probability space satisfying the usual
conditions, on which a $d$-dimensional Brownian motion $W=\left(
W_{t}\right)  _{t\geq0}$\ is defined. We assume that $\left(  \mathcal{F}%
_{t}\right)  $ is the $\mathcal{P}$- augmentation of the natural filtration of
$W.$

Let $T$ be a strictly positive real number and consider the following sets

$A_{1}$ is a non empty subset of $\mathbb{R}^{k}$ and $A_{2}=\left(  \left[
0,\infty\right)  \right)  ^{m}.$

$U_{1}$ is the class of measurable, adapted processes $v:\left[  0,T\right]
\times\Omega\longrightarrow A_{1}.$

$U_{2}$ is the class of measurable, adapted processes $\eta:\left[
0,T\right]  \times\Omega\longrightarrow A_{2}$ such that $\eta$ is
nondecreasing (componentwise), left-continuous with right limits and $\eta
_{0}=0$.

\subsection{The strict control problem}

\begin{definition}
\textit{An admissible strict control is an }$\mathcal{F}_{t}$%
-\textit{\ adapted process }$\left(  v,\eta\right)  \in U_{1}\times U_{2}%
$\textit{\ such that }
\[
\mathbb{E}\left[  \underset{t\in\left[  0,T\right]  }{\sup}\left\vert
v_{t}\right\vert ^{2}+\left\vert \eta_{T}\right\vert ^{2}\right]  <\infty.
\]
\textit{We denote by }$\mathcal{U}$\textit{\ the set of all admissible
controls.}
\end{definition}

For any $\left(  v,\eta\right)  \in\mathcal{U}$, we consider the following
SDE
\begin{equation}
\left\{
\begin{array}
[c]{l}%
dx_{t}^{v}=b\left(  t,x_{t}^{v},v_{t}\right)  dt+\sigma\left(  t,x_{t}%
^{v},v_{t}\right)  dW_{t}+G_{t}d\eta_{t},\\
x_{0}^{v}=x_{0},
\end{array}
\right.
\end{equation}
where%
\[%
\begin{array}
[c]{l}%
b:\left[  0,T\right]  \times\mathbb{R}^{n}\times A_{1}\longrightarrow
\mathbb{R}^{n},\\
\sigma:\left[  0,T\right]  \mathbb{\times R}^{n}\times A_{1}\longrightarrow
\mathcal{M}_{n\times d}\left(  \mathbb{R}\right)  ,\\
G:\left[  0,T\right]  \longrightarrow\mathcal{M}_{n\times m}\left(
\mathbb{R}\right)  .
\end{array}
\]
The expected cost, to be minimized over the class $\mathcal{U}$, has the form
\begin{equation}
J\left(  v,\eta\right)  =\mathbb{E}\left[  g\left(  x_{T}^{v}\right)
+\int_{0}^{T}h\left(  t,x_{t}^{v},v_{t}\right)  dt+\int_{0}^{T}l_{t}d\eta
_{t}\right]  ,
\end{equation}
where%
\begin{align*}
g  &  :\mathbb{R}^{n}\longrightarrow\mathbb{R},\\
h  &  :\left[  0,T\right]  \times\mathbb{R}^{n}\times A_{1}\longrightarrow
\mathbb{R},\\
l  &  :\left[  0,T\right]  \longrightarrow\left(  \lbrack0,\infty)\right)
^{m}.
\end{align*}
A control $\left(  u,\xi\right)  \in\mathcal{U}$ is called optimal, if that
solves%
\begin{equation}
J\left(  u,\xi\right)  =\inf\limits_{\left(  v,\eta\right)  \in\mathcal{U}%
}J\left(  v,\eta\right)  ,
\end{equation}
The following assumptions will be in force throughout this paper
\begin{align}
&  b,\sigma,g,h\ \text{are twice continuously differentiable with respect to
}x\text{.}\\
&  \text{The derivatives }b_{x},b_{xx},\sigma_{x},\sigma_{xx},g_{x}%
,g_{xx},h_{x},h_{xx}\text{ are continuous}\nonumber\\
&  \text{in }\left(  x,v\right)  \text{ and uniformly bounded.}\nonumber\\
&  b,\sigma\text{ are bounded by }C\left(  1+\left\vert x\right\vert
+\left\vert v\right\vert \right)  \text{.}\nonumber\\
&  G\text{ and }l\text{ are continuous and }G\text{ is bounded.}\nonumber
\end{align}
Under the above hypothesis, for every $\left(  v,\eta\right)  \in\mathcal{U}$,
equation $\left(  1\right)  $ has a unique strong solution and the cost
functional $J$ is well defined from $\mathcal{U}$ into $\mathbb{R}$.

\subsection{The relaxed model}

The strict control problem $\left\{  \left(  1\right)  ,\left(  2\right)
,\left(  3\right)  \right\}  $ formulated in the last subsection may fail to
have an optimal solution. Let us begin by two deterministic examples who show
that even in simple cases, existence of a strict optimal control is not ensured.

\textbf{Example 1. }The problem is to minimize, over the set of measurable
functions $v:\left[  0,T\right]  \rightarrow\left\{  -1,1\right\}  $, the
following functional cost%
\[
J\left(  v\right)  =%
{\displaystyle\int\nolimits_{0}^{T}}
\left(  x_{t}^{v}\right)  ^{2}dt,
\]
where $x_{t}^{v}$ denotes the solution of%
\[
\left\{
\begin{array}
[c]{c}%
dx_{t}^{v}=v_{t}dt,\\
x_{0}^{v}=0.
\end{array}
\right.
\]

We have
\[
\inf\limits_{v\in\mathcal{U}}J\left(  v\right)  =0.
\]

Indeed, consider the following sequence of controls%
\[
v_{t}^{n}=\left(  -1\right)  ^{k}\text{ \ if \ }%
\genfrac{.}{.}{}{0}{k}{n}%
T\leq t\leq%
\genfrac{.}{.}{}{0}{k+1}{n}%
T\ \ ,\ \ 0\leq k\leq n-1.
\]

Then clearly
\begin{align*}
\left\vert x_{t}^{v^{n}}\right\vert  &  \leq%
\genfrac{.}{.}{}{0}{T}{n}%
,\\
\left\vert J\left(  v^{n}\right)  \right\vert  &  \leq%
\genfrac{.}{.}{}{0}{T^{3}}{n^{2}}%
.
\end{align*}

Which implies that
\[
\inf\limits_{v\in\mathcal{U}}J\left(  v\right)  =0.
\]

There is however no control $v$ such that $J\left(  v\right)  =0$. If this
would have been the case, then for every $t,\ x_{t}^{v}=0$. This in turn would
imply that $v_{t}=0$, which is impossible. The problem is that the sequence
$\left(  v^{n}\right)  $ has no limit in the space of strict controls. This
limit if it exists, will be the natural candidate for optimality. If we
identify $v_{t}^{n}$ with the Dirac measure $\delta_{v_{t}^{n}}\left(
da\right)  $ and set $q_{n}\left(  dt,dv\right)  =\delta_{v_{t}^{n}}\left(
dv\right)  dt$, we get a measure on $\left[  0,1\right]  \times U$. Then, the
sequence $\left(  q_{n}\left(  dt,dv\right)  \right)  _{n}$ converges weakly
to $%
\genfrac{.}{.}{}{0}{1}{2}%
dt.\left[  \delta_{-1}+\delta_{1}\right]  \left(  da\right)  $.

\textbf{Example 2. }Consider the control problem where the system is governed
by the SDE%
\[
\left\{
\begin{array}
[c]{l}%
dx_{t}=v_{t}dt+dW_{t},\\
x_{0}=0.
\end{array}
\right.
\]

The functional cost to be minimized is given by%
\[
J\left(  v\right)  =\mathbb{E}\int_{0}^{T}\left[  x_{t}^{2}+\left(
1-v_{t}^{2}\right)  ^{2}\right]  dt.
\]

$U=\left[  -1,1\right]  $ and $x,\ v,\ W$ are one dimensional. The control $v$
(open loop) is a measurable function from $\left[  0,T\right]  $ into $U$. We
assume that $\mathbb{E}\left[  x_{t}^{2}\right]  =\left(  \mathbb{E}\left[
x_{t}\right]  \right)  ^{2}$. The separation principle applies to this
example, the optimal control minimizes%
\[
\int_{0}^{T}\left[  \widehat{x}_{t}^{2}+\left(  1-v_{t}^{2}\right)
^{2}\right]  ,
\]
where $\widehat{x}_{t}=\mathbb{E}\left[  x_{t}\right]  $ satisfies%
\[
\left\{
\begin{array}
[c]{l}%
d\widehat{x}_{t}=v_{t}dt,\\
\widehat{x}_{0}=0.
\end{array}
\right.
\]

This problem has no optimal strict control. A relaxed solution is to let
\[
\mu_{t}=%
\genfrac{.}{.}{}{}{1}{2}%
\delta_{1}+%
\genfrac{.}{.}{}{}{1}{2}%
\delta_{-1},
\]
where $\delta_{a}$ is an Dirac measure concentrated at a single point $a.$

\ 

This suggests that the set $\mathcal{U}$ of strict controls is too narrow and
should be embedded into a wider class with a richer topological structure for
which the control problem becomes solvable. The idea of relaxed control is to
replace the absolutely continuous part $v_{t}$ of the strict control by a
$\mathbb{P}\left(  A_{1}\right)  $-valued process $\left(  q_{t}\right)  $,
where $\mathbb{P}\left(  A_{1}\right)  $ is the space of probability measures
on $A_{1}$ equipped with the topology of weak convergence.

\begin{definition}
\textit{A relaxed control is a pair }$\left(  q,\eta\right)  $\textit{\ of
processes such that }$\eta\in U_{2}$ and $q$\textit{\ is a }$\mathbb{P}%
(A_{1})$\textit{--valued process, progressively measurable with respect to
}$\left(  \mathcal{F}_{t}\right)  $\textit{\ and such that for each }%
$t$\textit{, }$1_{\left(  0,t\right]  }.q$\textit{\ is }$\mathcal{F}_{t}$--measurable.

We denote by $\mathcal{R=R}_{1}\times U_{2}$ the set of relaxed controls.
\end{definition}

For any $\left(  q,\eta\right)  \in\mathcal{R}$, we consider the following
relaxed SDE
\begin{equation}
\left\{
\begin{array}
[c]{l}%
dx_{t}^{q}=\int_{A_{1}}b\left(  t,x_{t}^{q},a\right)  q_{t}\left(  da\right)
dt+\int_{A_{1}}\sigma\left(  t,x_{t}^{q},a\right)  q_{t}\left(  da\right)
dW_{t}+G_{t}d\eta_{t},\\
x_{0}^{q}=x_{0}.
\end{array}
\right.
\end{equation}

The expected cost, to be minimized over the class $\mathcal{R}$ of relaxed
controls, is defined as follows%
\begin{equation}
J(q,\eta)=\mathbb{E}\left[  g\left(  x_{T}^{q}\right)  +\int_{0}^{T}%
\int_{A_{1}}h\left(  t,x_{t}^{q},a\right)  q_{t}(da)dt+\int_{0}^{T}l_{t}%
d\eta_{t}\right]  .
\end{equation}

A relaxed control $\left(  \mu,\xi\right)  $ is called optimal, if it
satisfies
\begin{equation}
J\left(  \mu,\xi\right)  =\inf\limits_{(q,\eta)\in\mathcal{R}}J(q,\eta).
\end{equation}

The set $U_{1}$ of absolutely component of strict controls is embedded into
the set $\mathcal{R}_{1}$ of measure-valued processes by the mapping
\[
\Psi:v\in U_{1}\longmapsto\Psi\left(  v\right)  _{t}\left(  da\right)
=\delta_{v_{t}}(da)\in\mathcal{R}_{1},
\]
where, $\delta_{v}$ is the Dirac measure at a single point $v$.

\ 

Throughout this paper we suppose moreover that
\begin{align}
&  b,\ \sigma\text{ and }h\text{ are bounded,}\\
&  A_{1}\text{ is compact.}\nonumber
\end{align}

Haussmann and Suo $\left[  14\right]  $ have proved that the relaxed control
problem admits an optimal solution under general conditions on the
coefficients. Indeed, by using a compactification method and under some mild
continuity hypotheses on the data, it is shown by purely probabilistic
arguments that an optimal control for the problem exists. Moreover, the value
function is shown to be Borel measurable. See Haussmann and Suo $\left[
14\right]  $, Section 3, page 925 to page 934 and essentially Theorem 3.8,
page 933. See also $\left[  11,13\right]  $ for a complete study of relaxed controls.

\begin{remark}
If we put
\begin{align*}
\overline{b}\left(  t,x_{t}^{q},q_{t}\right)   &  =\int_{A_{1}}b\left(
t,x_{t}^{q},a\right)  q_{t}\left(  da\right)  ,\\
\overline{\sigma}\left(  t,x_{t}^{q},q_{t}\right)   &  =\int_{A_{1}}%
\sigma\left(  t,x_{t}^{q},a\right)  q_{t}\left(  da\right)  ,\\
\overline{h}\left(  t,x_{t}^{q},q_{t}\right)   &  =\int_{A_{1}}h\left(
t,x_{t}^{q},a\right)  q_{t}\left(  da\right)  ,
\end{align*}
then equation $\left(  5\right)  $ becomes
\[
\left\{
\begin{array}
[c]{l}%
dx_{t}^{q}=\overline{b}\left(  t,x_{t}^{q},q_{t}\right)  dt+\overline{\sigma
}\left(  t,x_{t}^{q},q_{t}\right)  dW_{t}+G_{t}d\eta_{t},\\
x_{0}^{q}=x_{0},
\end{array}
\right.
\]
with a functional cost given by
\[
J(q,\eta)=\mathbb{E}\left[  g\left(  x_{T}^{q}\right)  +\int_{0}^{T}%
\overline{h}\left(  t,x_{t}^{q},q_{t}\right)  dt+\int_{0}^{T}l_{t}d\eta
_{t}\right]  .
\]

Hence by introducing relaxed controls, we have replaced $A_{1}$ by a larger
space $\mathbb{P}\left(  A_{1}\right)  $. We have gained the advantage that
$\mathbb{P}\left(  A_{1}\right)  $ is both compact and convex. Moreover, the
drift, the diffusion and the running cost coefficients are linear with respect
to the measure-valued process $q.$
\end{remark}

\begin{remark}
The coefficients $\overline{b}\ $and $\overline{\sigma}$ (defined in the above
remark) check respectively the same assumptions as $b$ and $\sigma$. Then,
under assumptions $\left(  4\right)  $, $\overline{b}\ $and $\overline{\sigma
}$ are uniformly Lipschitz and with linear growth. Then, by classical results
on SDEs, for every $q\in\mathcal{R}$, equation $\left(  5\right)  $ admits a
unique strong solution.

On the other hand, It is easy to see that $\overline{h}$ checks the same
assumptions as $h$. Then, the functional cost $J$ is well defined from
$\mathcal{R}$ into $\mathbb{R}$.
\end{remark}

\begin{remark}
If $q_{t}=\delta_{v_{t}}$ is an atomic measure concentrated at a single point
$v_{t}\in A_{1}$, then for each $t\in\left[  0,T\right]  $ we have
\begin{align*}
\int_{A_{1}}b\left(  t,x_{t}^{q},a\right)  q_{t}\left(  da\right)   &
=\int_{A_{1}}b\left(  t,x_{t}^{q},a\right)  \delta_{v_{t}}\left(  da\right)
=b\left(  t,x_{t}^{q},v_{t}\right)  ,\\
\int_{A_{1}}\sigma\left(  t,x_{t}^{q},a\right)  q_{t}\left(  da\right)   &
=\int_{A_{1}}\sigma\left(  t,x_{t}^{q},a\right)  \delta_{v_{t}}\left(
da\right)  =\sigma\left(  t,x_{t}^{q},v_{t}\right)  ,\\
\int_{A_{1}}h\left(  t,x_{t}^{q},a\right)  q_{t}\left(  da\right)   &
=\int_{A_{1}}h\left(  t,x_{t}^{q},a\right)  \delta_{v_{t}}\left(  da\right)
=h\left(  t,x_{t}^{q},v_{t}\right)  .
\end{align*}

In this case $x^{q}=x^{v}$, $J(q,\eta)=J(v,\eta)$ and we get a strict control
problem. So the problem of strict controls $\left\{  \left(  1\right)
,\left(  2\right)  ,\left(  3\right)  \right\}  $ is a particular case of
relaxed control problems $\left\{  \left(  5\right)  ,\left(  6\right)
,\left(  7\right)  \right\}  $.
\end{remark}

\begin{remark}
The relaxed control problems studied e.g. in El Karoui et al $\left[
11\right]  $ and Bahlali-Mezerdi-Djehiche $\left[  3\right]  $ is different to
ours, in that they relax the corresponding infinitesimal generator of the
state process, which leads to a martingale problem for which the state process
driven by an orthogonal martingale measure. In our setting the driving
martingale measure $q_{t}\left(  da\right)  dW_{t}$ is however not orthogonal.
See Ma-Yong $\left[  20\right]  $ for more details.
\end{remark}

\section{Approximation of trajectories}

The next lemma, known as the Chattering Lemma, tells us that any
measure-valued process is a weak limit of a sequence of absolutely continuous
processes. This lemma was proved for deterministic measures and then extended
to random measures in $\left[  13\right]  $.

\begin{lemma}
(Chattering\thinspace\thinspace Lemma).\thinspace\ \textit{Let }$(q_{t}%
)$\textit{\ be a predictable process with values in the space of probability
measures on }$A_{1}$\textit{. Then there exists a sequence of predictable
processes }$(u^{n})$\textit{\ with values in }$A_{1}$\textit{\ such that }%
\begin{equation}
\delta_{u_{t}^{n}}(da)\,dt\mathit{\ }\text{converges weakly to\textit{\ }%
}q_{t}(da)\,dt,\,\,\,\mathcal{P}-a.s.
\end{equation}

\end{lemma}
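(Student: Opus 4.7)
The plan is to construct $(u^n)$ in two steps: a pathwise piecewise-constant approximation obtained by time-averaging followed by a further Dirac chattering, and then to verify that the construction can be arranged measurably in $\omega$ so that $u^n$ is predictable. First, I would fix $n$ and partition $[0,T]$ into intervals $I_k = [kT/n,(k+1)T/n)$, forming the averaged probability measures $\bar q^{n,k} = (n/T)\int_{I_k} q_s\,ds$. Since $A_1$ is compact, the set of finitely supported Dirac combinations is dense in $\mathbb{P}(A_1)$ for the weak topology, so $\bar q^{n,k}$ can be written up to small error as $\sum_i \lambda_i^{n,k}\delta_{a_i^{n,k}}$. Then I subdivide each $I_k$ into consecutive sub-intervals of lengths $\lambda_i^{n,k}|I_k|$ and set $u_t^n = a_i^{n,k}$ on the $i$-th sub-interval.

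For the weak convergence, I take an arbitrary $\phi \in C_b([0,T]\times A_1)$. Uniform continuity of $\phi$ on the compact set $[0,T]\times A_1$ ensures that the piecewise-constant Dirac process realises the convex combination $\sum_i\lambda_i^{n,k}\delta_{a_i^{n,k}}$ in the limit on each $I_k$, while the averaged measures $\bar q^{n,k}$ in turn approximate $q_t$ along $I_k$ against $\phi$. Combining the two errors and summing over $k$ gives
\[
\int_0^T \phi(t,u_t^n)\,dt \longrightarrow \int_0^T\int_{A_1}\phi(t,a)q_t(da)\,dt
\]
as $n\to\infty$, which is the required weak convergence for each fixed $\omega$.

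The main obstacle is measurability: the approximating processes $u^n$ must be predictable, not merely Borel functions of $\omega$ built path by path. I would circumvent this by making the Dirac approximation canonical; namely, fix a finite Borel partition $\{B_i^n\}$ of $A_1$ of mesh less than $1/n$ with representatives $a_i^n\in B_i^n$, and set $\lambda_i^{n,k}(\omega) = \bar q^{n,k}(\omega)(B_i^n)$. Then $\lambda_i^{n,k}$ is $\mathcal F_{(k+1)T/n}$-measurable, the subdivision of $I_k$ into sub-intervals of length $\lambda_i^{n,k}|I_k|$ has measurable endpoints, and the resulting $u^n$ is piecewise constant, left-continuous and adapted, hence predictable. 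Since the whole construction is pathwise measurable, the weak convergence obtained above holds $\mathcal P$-almost surely, completing the proof along the lines of the extension to random measures in $[13]$.
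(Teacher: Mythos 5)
The paper itself does not prove this lemma --- it simply cites Fleming $[13]$ --- so your constructive argument is a genuinely different route, and the analytic part of it (time-averaging over a partition of $[0,T]$, canonical discretization of $\bar q^{n,k}$ via a fixed partition $\{B_i^n\}$ of $A_1$, chattering on sub-intervals of lengths $\lambda_i^{n,k}\left\vert I_k\right\vert$, and the three-error estimate against a uniformly continuous test function) is the standard and correct skeleton of the chattering lemma.

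However, there is a genuine gap exactly at the point you flag as ``the main obstacle.'' You define $\bar q^{n,k}=(n/T)\int_{I_k}q_s\,ds$ with $I_k=[kT/n,(k+1)T/n)$ and correctly observe that $\lambda_i^{n,k}=\bar q^{n,k}(B_i^n)$ is $\mathcal F_{(k+1)T/n}$-measurable; but you then use these weights to define $u^n_t$ for $t\in I_k$. For $t$ in the interior of $I_k$ this makes $u^n_t$ depend on $q_s$ for $s$ up to $(k+1)T/n>t$, so $u^n_t$ is \emph{not} $\mathcal F_t$-measurable: the process is anticipating, hence neither adapted nor predictable, and the conclusion ``left-continuous and adapted, hence predictable'' does not follow. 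Since predictability of $(u^n)$ is precisely what distinguishes the stochastic (random-measure) version of the lemma from the deterministic one, this cannot be waved away. The standard repair is a one-step delay: on $I_k$ ($k\geq1$) use the weights coming from the average over the \emph{previous} interval $I_{k-1}$, which are $\mathcal F_{kT/n}$-measurable, and put an arbitrary fixed point of $A_1$ on $I_0$; the resulting process is genuinely predictable, and the extra error introduced by the shift is controlled by the modulus of continuity of the test function in $t$ and vanishes as $n\to\infty$, so the weak convergence argument goes through unchanged. With that modification your proof is correct.
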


\begin{proof}
See Fleming $\left[  13\right]  .$
\end{proof}

The next lemma gives the stability of the controlled SDE with respect to the
control variable.

\begin{lemma}
Let $\left(  q,\eta\right)  \in\mathcal{R}$ be a relaxed control and $x^{q}%
$\textit{\ the corresponding trajectory. Then there exists a sequence }%
$(v^{n},\eta)_{n}\subset\mathcal{U}$ such that
\begin{equation}
\underset{n\rightarrow\infty}{\lim}\mathbb{E}\left[  \underset{t\in\left[
0,T\right]  }{\sup}\left\vert x_{t}^{n}-x_{t}^{q}\right\vert ^{2}\right]  =0,
\end{equation}%
\begin{equation}
\underset{n\rightarrow\infty}{\lim}J(v^{n},\eta)=J(q,\eta),
\end{equation}
where $x^{n}$ denotes the solution of equation $\left(  1\right)  $ associated
with $\left(  v^{n},\eta\right)  $.
\end{lemma}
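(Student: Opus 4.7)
The plan is to combine the Chattering Lemma (Lemma 2) with standard SDE stability estimates and a Gronwall argument. First, I apply the Chattering Lemma to the measure-valued process $q$ to obtain a sequence $(u^n)$ of $A_1$-valued predictable processes such that $\delta_{u^n_t}(da)\,dt$ converges weakly to $q_t(da)\,dt$, $\mathcal{P}$-a.s. I set $v^n := u^n$ and keep $\eta$; the pair $(v^n,\eta)$ lies in $\mathcal{U}$ because $A_1$ is compact (so $v^n$ is uniformly bounded) and $\eta$ already satisfies the moment condition as part of a relaxed control. Let $x^n$ denote the corresponding solution of (1). Since the singular components coincide, the terms $G_t\,d\eta_t$ cancel in the difference $e^n_t := x^n_t - x^q_t$.

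The second step is to estimate $\mathbb{E}\sup_{t \leq T}|e^n_t|^2$. Writing, with $\overline{b}$ and $\overline{\sigma}$ as in Remark 4,
\begin{align*}
e^n_t = \int_0^t [b(s,x^n_s,v^n_s) - \overline{b}(s,x^q_s,q_s)]\,ds + \int_0^t [\sigma(s,x^n_s,v^n_s) - \overline{\sigma}(s,x^q_s,q_s)]\,dW_s,
\end{align*}
I split each integrand into a Lipschitz piece $f(s,x^n_s,v^n_s) - f(s,x^q_s,v^n_s)$ (for $f \in \{b,\sigma\}$) and a chattering piece $f(s,x^q_s,v^n_s) - \overline{f}(s,x^q_s,q_s)$. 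Applying Cauchy--Schwarz to the drift, the Burkholder--Davis--Gundy inequality to the diffusion, and the uniform Lipschitz property in $x$ from assumption (4), I obtain
\begin{align*}
\mathbb{E}\sup_{s \leq t}|e^n_s|^2 \leq C \int_0^t \mathbb{E}\sup_{r \leq s}|e^n_r|^2\,ds + C\rho^n,
\end{align*}
where $\rho^n$ collects the $L^2(\Omega\times[0,T])$ norms of the two chattering pieces. Gronwall's lemma then reduces (9) to proving $\rho^n \to 0$.

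The main obstacle is this last reduction. For the drift part, the Chattering Lemma together with the boundedness of $b$ from assumption (8) yields, pathwise, weak convergence of the uniformly bounded integrand against $\delta_{v^n_s}\,ds - q_s\,ds$; combined with dominated convergence, this handles the drift contribution to $\rho^n$. The delicate point is the diffusion piece, since weak convergence of controls does not immediately force the $L^2$ norm $\mathbb{E}\int_0^T|\sigma(s,x^q_s,v^n_s) - \overline{\sigma}(s,x^q_s,q_s)|^2\,ds$ to vanish. To resolve this I would exploit the fine time-partition structure of Fleming's chattering construction, in essence recognizing that the stochastic integral against $W$ of the chattering piece can be analyzed via the (non-orthogonal) martingale measure $q_t(da)\,dW_t$ alluded to in Remark 7; this is the technically heaviest step.

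Finally, to establish (10), I decompose $J(v^n,\eta) - J(q,\eta)$ into terminal, running, and singular pieces. The terminal piece $\mathbb{E}[g(x^n_T) - g(x^q_T)]$ vanishes by (9) and the boundedness of $g_x$ from assumption (4). The running piece splits into a Lipschitz-in-$x$ difference (controlled by (9)) plus a chattering-in-$q$ integral against $h$, handled exactly as the drift contribution to $\rho^n$ above. The singular piece is identically zero since $\eta$ is the same in both controls, giving (10).
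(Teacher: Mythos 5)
Your proposal follows essentially the same route as the paper: invoke the Chattering Lemma, observe that the singular terms $G_{t}\,d\eta_{t}$ cancel in the difference $x_{t}^{n}-x_{t}^{q}$, and reduce the two assertions of the lemma to a Lipschitz/Gronwall estimate plus the vanishing of the chattering remainders in the drift, the diffusion and the running cost. The one step you flag as technically heaviest --- showing that $\mathbb{E}\int_{0}^{T}\left\vert \sigma\left(  s,x_{s}^{q},v_{s}^{n}\right)  -\overline{\sigma}\left(  s,x_{s}^{q},q_{s}\right)  \right\vert ^{2}ds$ tends to zero, which indeed does not follow from weak convergence of $\delta_{v_{s}^{n}}(da)\,ds$ alone --- is precisely the point at which the paper itself stops and cites Bahlali--Mezerdi--Djehiche $\left[  3,\text{ Lemma 4.1}\right]  $, so your sketch is no less complete than the paper's own proof; to genuinely close it one must, as you indicate, use the explicit piecewise-constant time-partition structure of Fleming's chattering construction rather than weak convergence in the abstract.
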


\begin{proof}
i) Let $q$ be a relaxed control, then from the chattering lemma (lemma7),
there exists a sequence of strict controls $\left(  v^{n}\right)  _{n}$\ such
that\textit{\ }%
\[
\delta_{v_{t}^{n}}(da)\,dt\mathit{\ }\text{converges weakly to\textit{\ }%
}q_{t}(da)\,dt,\,\,\,\mathcal{P}-a.s.
\]
where $\delta_{v_{t}^{n}}$ is a Dirac measure concentrated at a single point
$v_{t}^{n}$

Let $x^{q}$ and $x^{n}$ be the trajectories of the system associated,
respectively, with $\left(  q,\eta\right)  $ and $\left(  v^{n},\eta\right)
$, and $t\in\left[  0,T\right]  $, then%
\begin{align*}
x_{t}^{n}-x_{t}^{q}  &  =\int_{0}^{t}\left[  \int_{A_{1}}b\left(  s,x_{s}%
^{n},a\right)  \delta_{v_{s}^{n}}\left(  da\right)  -\int_{A_{1}}b\left(
s,x_{s}^{q},a\right)  q_{s}\left(  da\right)  \right]  ds\\
&  +\int_{0}^{t}\left[  \int_{A_{1}}\sigma\left(  s,x_{s}^{n},a\right)
\delta_{v_{s}^{n}}\left(  da\right)  -\int_{A_{1}}\sigma\left(  s,x_{s}%
^{q},a\right)  q_{s}\left(  da\right)  \right]  dW_{s},
\end{align*}

does not depend on the singular part. Then, $\left(  10\right)  $ is proved by
using the results and the same proof that in Bahlali-Mezerdi-Djehiche $\left[
3\text{, Lemma 4.1, page 12}\right]  $.

ii) On the other hand, $\left(  11\right)  $ is proved in $\left[  3\text{,
Lemma 4.1, page 12}\right]  .$
\end{proof}

\begin{remark}
As a consequence, it is easy to see that the strict and relaxed optimal
control problems have the same value function.
\end{remark}

\section{Maximum principle for near optimal controls}

In this section we establish necessary condition of near optimality satisfied
by a sequence of nearly optimal strict controls. This result is based on
Ekeland's variational principle which is given by the following.

\begin{lemma}
\textit{Let }$\left(  E,d\right)  $\textit{\ be a complete metric space and
}$f:E\longrightarrow\overline{\mathbb{R}}$\textit{\ be lower-semicontinuous
and bounded from below. Given }$\varepsilon>0$, suppose $u^{\varepsilon}\in E$
satisfies $f\left(  u^{\varepsilon}\right)  \leq\inf\left(  f\right)
+\varepsilon.$ Then for any $\lambda>0$, \textit{there exists }$v\in
E$\textit{\ such that }

\begin{enumerate}
\item $f\left(  v\right)  \leq f\left(  u^{\varepsilon}\right)  .$

\item $d\left(  u^{\varepsilon},v\right)  \leq\lambda.$

\item $f\left(  v\right)  <f\left(  w\right)  +\frac{\varepsilon}{\lambda
}d\left(  v,w\right)  \;,\;\forall\;w\neq v.$
\end{enumerate}
\end{lemma}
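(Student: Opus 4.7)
The plan is to prove Ekeland's variational principle by constructing, via a greedy recursion on $E$, a minimal element for the partial order
\[
w \preceq u \iff f(w) + \tfrac{\varepsilon}{\lambda}\, d(w,u) \le f(u),
\]
which I first verify is indeed a partial order (reflexivity is trivial; transitivity follows from the triangle inequality; antisymmetry uses that $u \preceq w$ and $w \preceq u$ force $d(u,w) = 0$). Set $\alpha = \varepsilon/\lambda$ and, for $u \in E$, write $S(u) = \{ w \in E : w \preceq u\}$. Note that $S(u)$ is nonempty (it contains $u$) and, by lower semicontinuity of $f$ and continuity of $d(\cdot,u)$, the set $S(u)$ is closed.

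Starting from $u_0 = u^\varepsilon$, I recursively pick $u_{n+1} \in S(u_n)$ with the near-greedy property
\[
f(u_{n+1}) \le \inf_{w \in S(u_n)} f(w) + 2^{-(n+1)}.
\]
By construction the sequence $\bigl(f(u_n)\bigr)$ is nonincreasing and bounded below (by $\inf f$), hence convergent. From $u_{n+1} \preceq u_n$ I read off $\alpha\, d(u_{n+1}, u_n) \le f(u_n) - f(u_{n+1})$, and since the right-hand side is summable via telescoping, $(u_n)$ is Cauchy. Completeness of $(E,d)$ yields a limit $v \in E$, and passing to the limit $m \to \infty$ in $f(u_m) + \alpha\, d(u_m, u_n) \le f(u_n)$ (using lower semicontinuity of $f$ and continuity of $d(\cdot,u_n)$) gives $v \preceq u_n$ for every $n$; in particular $v \preceq u_0 = u^\varepsilon$, which immediately yields property 1 ($f(v) \le f(u^\varepsilon)$) and, combined with $f(u^\varepsilon) \le \inf f + \varepsilon$, also property 2 ($\alpha\, d(v, u^\varepsilon) \le f(u^\varepsilon) - f(v) \le \varepsilon$, so $d(v, u^\varepsilon) \le \lambda$).

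The delicate step, and the real heart of the argument, is to show that $v$ is minimal for $\preceq$, which is equivalent to property 3. Suppose $w \preceq v$. Then $w \preceq u_n$ for every $n$, so $w \in S(u_n)$ and hence
\[
f(w) \ge \inf_{S(u_n)} f \ge f(u_{n+1}) - 2^{-(n+1)}.
\]
Letting $n \to \infty$ gives $f(w) \ge \lim f(u_n) \ge f(v)$ (the last inequality again by lower semicontinuity). But $w \preceq v$ means $\alpha\, d(w,v) \le f(v) - f(w) \le 0$, so $w = v$. Therefore no $w \ne v$ satisfies $w \preceq v$; unpacking the definition of $\preceq$, this is exactly the strict inequality $f(v) < f(w) + (\varepsilon/\lambda)\, d(v,w)$ for all $w \ne v$, which is property 3.

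The main obstacle is the minimality argument in the last paragraph: one must be careful that the $2^{-(n+1)}$ slack in the greedy selection does not destroy the chain of inequalities, and that lower semicontinuity is applied in the right direction when passing to the limit. Everything else (the Cauchy estimate, properties 1 and 2) is essentially bookkeeping once the partial order $\preceq$ has been introduced.
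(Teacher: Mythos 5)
Your proof is correct and complete: the partial order $w \preceq u \iff f(w) + \tfrac{\varepsilon}{\lambda}d(w,u) \le f(u)$, the near-greedy Cauchy sequence, and the minimality argument for the limit point constitute the standard proof of Ekeland's variational principle, and each step (the telescoping bound, the use of lower semicontinuity to pass to the limit in $f(u_m)+\alpha d(u_m,u_n)\le f(u_n)$, and the contradiction showing no $w\neq v$ satisfies $w\preceq v$) is carried out correctly. The paper itself offers no proof of this lemma, deferring entirely to the citation of Ekeland's original article, so there is no in-paper argument to compare against; your self-contained argument is exactly the one found in that reference and in standard texts.
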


\begin{proof}
See Ekeland $\left[  10\right]  .$
\end{proof}

To apply Ekeland's variational principle, we have to endow the set
$\mathcal{U}$ of strict controls with an appropriate metric. For any $\left(
u,\xi\right)  ,\left(  v,\eta\right)  \in\mathcal{U}$, we set
\[%
\begin{array}
[c]{l}%
d_{1}\left(  u,v\right)  =\mathcal{P}\otimes dt\left\{  \left(  \omega
,t\right)  \in\Omega\times\left[  0,T\right]  ,\;u\left(  t,\omega\right)
\neq v\left(  t,\omega\right)  \right\}  ,\\
d_{2}\left(  \xi,\eta\right)  =\left(  \mathbb{E}%
{\displaystyle\int\nolimits_{0}^{T}}
\underset{t\in\left[  0,T\right]  }{\sup}\left\vert \xi_{t}-\eta
_{t}\right\vert ^{2}dt\right)  ^{1/2},\\
d\left[  \left(  u,\xi\right)  ,\left(  v,\eta\right)  \right]  =d_{1}\left(
u,v\right)  +d_{2}\left(  \xi,\eta\right)  ,
\end{array}
\]
where $\mathcal{P}\otimes dt$ is the product measure of $\mathcal{P}$ with the
Lebesgue measure $dt$.

Let us summarize some of the properties satisfied by $d.$

\begin{lemma}
\begin{enumerate}
\item $\left(  \mathcal{U},d\right)  $\textit{\ is a complete metric space.}

\item The cost functional $J$ is continuous from $\mathcal{U}$ into
$\mathbb{R}$.
\end{enumerate}
\end{lemma}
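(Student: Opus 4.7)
The plan is to treat the two assertions separately. For assertion 1, I would first check that $d$ really is a metric: $d_1$ is the standard Krasovskii--Warga pseudo-metric of convergence in $\mathcal{P}\otimes dt$-measure on $U_1$ (a genuine metric after identifying controls agreeing $\mathcal{P}\otimes dt$-a.e.), $d_2$ is induced by an $L^2$-type norm, and the sum is automatically a metric. Completeness then splits into the two coordinates. Given a $d$-Cauchy sequence $(v^n,\eta^n)$, $(v^n)$ is $d_1$-Cauchy, hence convergent in $\mathcal{P}\otimes dt$-measure to some limit $v$; extracting an a.e.-convergent subsequence, the limit inherits measurability, adaptedness, and $A_1$-valuedness (using compactness of $A_1$ from (8)). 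Similarly $(\eta^n)$ is $d_2$-Cauchy, hence Cauchy in $L^2(\Omega;L^\infty([0,T]))$, and along an a.s.-uniformly convergent subsequence the pointwise limit $\eta$ remains nondecreasing, left-continuous with right limits, satisfies $\eta_0=0$, and $\mathbb{E}|\eta_T|^2<\infty$. Compactness of $A_1$ also guarantees $\mathbb{E}\sup_t|v_t|^2<\infty$ automatically, so $(v,\eta)\in\mathcal{U}$.

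For assertion 2, I take $(v^n,\eta^n)\to(v,\eta)$ in $d$ and show $J(v^n,\eta^n)\to J(v,\eta)$. The crucial ingredient is the trajectory stability
\[
\lim_{n\to\infty}\mathbb{E}\sup_{t\in[0,T]}|x^n_t-x^v_t|^2=0,
\]
which I would establish by a Gronwall argument after the splitting
\[
b(s,x^n_s,v^n_s)-b(s,x^v_s,v_s)=\bigl[b(s,x^n_s,v^n_s)-b(s,x^v_s,v^n_s)\bigr]+\bigl[b(s,x^v_s,v^n_s)-b(s,x^v_s,v_s)\bigr],
\]
and the analogous decomposition for $\sigma$. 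The first bracket is controlled by the uniform Lipschitz constant of $b$ in $x$ coming from the bounded derivative $b_x$ in (4); the second tends to $0$ in $L^2(dt\otimes d\mathcal{P})$ by dominated convergence, since $d_1(v^n,v)\to 0$ gives convergence in $\mathcal{P}\otimes dt$-measure, $b$ is continuous in $v$, and (8) supplies an integrable bound. The singular contribution $\int_0^{\cdot}G_s\,d(\eta^n-\eta)_s$ is handled by integration by parts together with boundedness of $G$ and the uniform $L^2$-control of $\eta^n-\eta$ furnished by $d_2$; Burkholder--Doob--Davis--Gundy on the stochastic term and a Gronwall step then close the estimate.

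Once the trajectory convergence is in hand, $g(x^n_T)\to g(x^v_T)$ and $\int_0^T h(t,x^n_t,v^n_t)\,dt\to\int_0^T h(t,x^v_t,v_t)\,dt$ in $L^1(\Omega)$ by the same Lipschitz-in-$x$ / bounded-and-continuous-in-$v$ splitting used for $b$. For the singular cost $\int_0^T l_t\,d\eta^n_t$, I pass to a subsequence along which $\eta^n\to\eta$ uniformly in $t$, $\mathcal{P}$-a.s.; the finite pathwise measures $d\eta^n$ then converge weakly on $[0,T]$ to $d\eta$, so continuity and boundedness of $l$ yield pathwise convergence of the Stieltjes integrals, upgraded to $L^1(\Omega)$-convergence by uniform integrability (from the $L^2$-boundedness of $\eta^n_T$). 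The main obstacle, in my view, is precisely this last integral: since $l$ is only continuous and need not be of bounded variation, the simple integration-by-parts trick that would reduce it to $\int\eta^n_s\,dl_s$ is unavailable, and one must instead argue via subsequential a.s.-uniform convergence, weak convergence of $d\eta^n$, and a uniform-integrability upgrade to recover convergence for the full sequence.
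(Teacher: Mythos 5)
The paper itself disposes of this lemma in two lines: completeness of $(U_{1},d_{1})$ and continuity of $J$ are simply cited from Elliott--Kohlmann $[12]$, and completeness of $(U_{2},d_{2})$ is declared clear. Your self-contained argument is therefore a genuinely different (and more informative) route, and your handling of the singular cost term $\int_{0}^{T}l_{t}\,d\eta_{t}^{n}$ via subsequential a.s.-uniform convergence, pathwise weak convergence of $d\eta^{n}$, and a uniform-integrability upgrade addresses a point the paper's citation does not even cover, since $[12]$ contains no singular component. That part of your proposal I would accept as written.

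There are, however, two concrete problems. First, $d_{1}$ is \emph{not} the pseudo-metric of convergence in $\mathcal{P}\otimes dt$-measure: it is $d_{1}(u,v)=(\mathcal{P}\otimes dt)\{u\neq v\}$, which generates a strictly stronger topology (e.g.\ $v^{n}=v+1/n$ converges to $v$ in measure while $d_{1}(v^{n},v)=T$ for all $n$). A $d_{1}$-Cauchy sequence is Cauchy in measure, so your candidate limit $v$ exists, but you still must show $d_{1}(v^{n},v)\rightarrow0$; this does not follow from convergence in measure alone and requires an extra step, e.g.\ passing to an a.e.-convergent subsequence $v^{m}\rightarrow v$ and using $\mathbf{1}_{\{v^{n}\neq v\}}\leq\liminf_{m}\mathbf{1}_{\{v^{n}\neq v^{m}\}}$ a.e.\ together with Fatou to get $d_{1}(v^{n},v)\leq\liminf_{m}d_{1}(v^{n},v^{m})$. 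As written, your completeness argument stops one step short. Second, you propose to control the state-equation term $\int_{0}^{\cdot}G_{s}\,d(\eta^{n}-\eta)_{s}$ by integration by parts, which would produce $\int(\eta^{n}_{s}-\eta_{s})\,dG_{s}$; but $G$ is only assumed continuous and bounded, not of bounded variation, so $dG_{s}$ is meaningless --- this is exactly the obstruction you yourself identify for $l$ two sentences later. The fix is the same one you use for $l$: argue pathwise via weak convergence of the measures $d\eta^{n}$ along an a.s.-uniformly convergent subsequence, then close the Gronwall estimate for that subsequence and conclude for the full sequence by a standard subsequence-of-subsequences argument.
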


\begin{proof}
1. It is clear that $\left(  U_{2},d_{2}\right)  $ is a complete metric space.
Moreover, it was shown in $\left[  12\right]  $ that $\left(  U_{1}%
,d_{1}\right)  $ is a complete metric space. Hence $\left(  \mathcal{U}%
,d\right)  $ is a complete metric space as product of two complete metric spaces.

2. is proved in $\left[  12\right]  .$
\end{proof}

Now let $\left(  \mu,\xi\right)  \in\mathcal{R}$ be an optimal relaxed control
and denote by $x^{\mu}$ the trajectory of the system controlled by $\left(
\mu,\xi\right)  $. From $\left(  9\right)  $ and $\left(  10\right)  $, there
exists a sequence $\left(  u^{n}\right)  _{n}$ in $U_{1}$ such that
\begin{align*}
dt\mu_{t}^{n}(da)  &  =dt\delta_{u_{t}^{n}}(da)\underset{n\longrightarrow
\infty}{\longrightarrow}dt\mu_{t}\left(  da\right)  \text{ weakly}%
,\text{\textit{\ \ }}\mathcal{P}-a.s,\\
&  \mathbb{E}\left[  \underset{t\in\left[  0,T\right]  }{\sup}\left\vert
x_{t}^{n}-x_{t}^{\mu}\right\vert ^{2}\right]  \underset{n\longrightarrow
\infty}{\longrightarrow}0,
\end{align*}
where $x_{t}^{n}$ is the solution of equation $\left(  5\right)  $
corresponding to the control $\left(  \mu^{n},\xi\right)  .$

According to the optimality of $\left(  \mu,\xi\right)  $ and $\left(
9\right)  $, there exists a sequence $\left(  \varepsilon_{n}\right)  $ of
positive real numbers with $\underset{n\rightarrow\infty}{\lim}\varepsilon
_{n}=0$ such that%
\[
J\left(  u^{n},\xi\right)  =J\left(  \mu^{n},\xi\right)  \leq J\left(  \mu
,\xi\right)  +\varepsilon_{n}.
\]

A suitable version of lemma 10 $\left(  \text{see }\left[  12\right]  \text{
theorem 4.1}\right)  $ implies that a given any $\varepsilon_{n}>0$, there
exists $\left(  u^{n},\xi\right)  \in\mathcal{U}$ such that
\begin{align}
J\left(  u^{n},\xi\right)   &  \leq\underset{\left(  v,\eta\right)
\in\mathcal{U}}{\inf}J\left(  v,\eta\right)  +\varepsilon_{n},\\
J\left(  u^{n},\xi\right)   &  \leq J\left(  v,\eta\right)  +\varepsilon_{n}d
\left[  \left(  u^{n},\xi\right)  ;\left(  v,\eta\right)  \right]
\;;\;\forall\left(  v,\eta\right)  \in\mathcal{U}\text{.}\nonumber
\end{align}

Define the following two perturbations%
\begin{align}
\left(  u_{t}^{n,\theta},\xi_{t}\right)   &  =\left\{
\begin{array}
[c]{l}%
\left(  v,\xi_{t}\right)  \text{ \ if }t\in\left[  \tau,\tau+\theta\right]
,\\
\left(  u_{t}^{n},\xi_{t}\right)  \text{ \ Otherwise,}%
\end{array}
\right. \\
\left(  u_{t}^{n},\xi_{t}^{\theta}\right)   &  =\left(  u_{t}^{n},\xi
_{t}+\theta\left(  \eta_{t}-\xi_{t}\right)  \right)  ,
\end{align}
where $v$ is a $A_{1}$-valued, $\mathcal{F}_{t}$-measurable random variable
and $\eta$ is an increasing process with $\eta_{0}=0$ such that $\mathbb{E}%
\left[  \left\vert v_{t}\right\vert ^{2}+\left\vert \eta_{T}\right\vert
^{2}\right]  <\infty..$

From $\left(  12\right)  $ we have
\begin{align*}
0  &  \leq J\left(  u_{t}^{n,\theta},\xi_{t}\right)  -J\left(  u^{n}%
,\xi\right)  +\varepsilon_{n}d\left[  \left(  u^{n},\xi\right)  ;\left(
u^{n,\theta},\xi\right)  \right]  ,\\
0  &  \leq J\left(  u_{t}^{n},\xi_{t}^{\theta}\right)  -J\left(  u^{n}%
,\xi\right)  +\varepsilon_{n}d\left[  \left(  u^{n},\xi\right)  ;\left(
u^{n},\xi^{\theta}\right)  \right]  .
\end{align*}

From the definition of the metric $d,$ we obtain
\begin{align*}
0  &  \leq J\left(  u_{t}^{n,\theta},\xi_{t}\right)  -J\left(  u^{n}%
,\xi\right)  +\varepsilon_{n}d_{1}\left(  u^{n},u^{n,\theta}\right)  ,\\
0  &  \leq J\left(  u_{t}^{n},\xi_{t}^{\theta}\right)  -J\left(  u^{n}%
,\xi\right)  +\varepsilon_{n}d_{2}\left(  \xi,\xi^{\theta}\right)  .
\end{align*}

Using the definitions of $d_{1}$ and $d_{2}$, it holds that%
\begin{align}
0  &  \leq J\left(  u_{t}^{n,\theta},\xi_{t}\right)  -J\left(  u^{n}%
,\xi\right)  +\varepsilon_{n}C\theta,\\
0  &  \leq J\left(  u_{t}^{n},\xi_{t}^{\theta}\right)  -J\left(  u^{n}%
,\xi\right)  +\varepsilon_{n}C\theta.
\end{align}

From these above inequalities, we shall establish the near maximum principle
in integral form.

\begin{theorem}
(The near maximum principle in integral form). For each $\varepsilon_{n}%
>0$\textit{, there exists }$\left(  u^{n},\xi\right)  \in\mathcal{U}$
\textit{such that there exists two unique couples of adapted processes}%
\begin{align*}
\left(  p^{n},P^{n}\right)   &  \in L^{2}\left(  \left[  0,T\right]
;\mathbb{R}^{n}\right)  \times L^{2}\left(  \left[  0,T\right]  ;\mathbb{R}%
^{n\times d}\right)  ,\\
\left(  k^{n},K^{n}\right)   &  \in L^{2}\left(  \left[  0,T\right]
;\mathbb{R}^{n\times n}\right)  \times\left(  L^{2}\left(  \left[  0,T\right]
;\mathbb{R}^{n\times n}\right)  \right)  ^{d},
\end{align*}
\textit{solution of the following backward stochastic differential equations}%
\begin{equation}
\left\{
\begin{array}
[c]{l}%
-dp_{t}^{n}=H_{x}\left(  x_{t}^{n},u_{t}^{n},p_{t}^{n},P_{t}^{n}\right)
dt-P_{t}^{n}dW_{t}\\
p_{T}^{n}=g_{x}(x_{T}^{n}),
\end{array}
\right.
\end{equation}%
\begin{equation}
\left\{
\begin{array}
[c]{ll}%
-dk_{t}^{n}= & \left[  b_{x}^{\ast}\left(  t,x_{t}^{n},u_{t}^{n}\right)
k_{t}^{n}+k_{t}^{n}b_{x}\left(  t,x_{t}^{n},u_{t}^{n}\right)  \right]  dt\\
& +\sigma_{x}^{\ast}\left(  t,x_{t}^{n},u_{t}^{n}\right)  k_{t}^{n}\sigma
_{x}\left(  t,x_{t}^{n},u_{t}^{n}\right)  dt\\
& +\left[  \sigma_{x}^{\ast}\left(  t,x_{t}^{n},u_{t}^{n}\right)  K_{t}%
^{n}+K_{t}^{n}\sigma_{x}\left(  t,x_{t}^{n},u_{t}^{n}\right)  \right]  dt\\
& +H_{xx}\left(  x_{t}^{n},u_{t}^{n},p_{t}^{n},P_{t}^{n}\right)  dt-K_{t}%
^{n}dW_{t},\\
k_{T}^{n}= & g_{xx}(x_{T}^{n}).
\end{array}
\right.
\end{equation}
\textit{such that for all }$\left(  v,\eta\right)  \in\mathcal{U}$,%
\begin{gather}
H\left(  t,x_{t}^{n},u_{t}^{n},p_{t}^{n},P_{t}^{n}-k_{t}^{n}\sigma\left(
t,x_{t}^{n},u_{t}^{n}\right)  \right)  +%
\genfrac{.}{.}{}{0}{1}{2}%
Tr\left[  \sigma\sigma^{\ast}\left(  t,x_{t}^{n},u_{t}^{n}\right)  \right]
k_{t}^{n}\\
\leq H\left(  t,x_{t}^{n},v,p_{t}^{n},P_{t}^{n}-k_{t}^{n}\sigma\left(
t,x_{t}^{n},u_{t}^{n}\right)  \right)  +%
\genfrac{.}{.}{}{0}{1}{2}%
Tr\left[  \sigma\sigma^{\ast}\left(  t,x_{t}^{n},v\right)  \right]  k_{t}%
^{n}+C\varepsilon_{n}.\nonumber
\end{gather}%
\begin{equation}
0\leq\mathbb{E}\int_{0}^{T}\left(  l_{t}+G_{t}^{\ast}p_{t}^{n}\right)
d\left(  \eta-\xi\right)  _{t}+C\varepsilon_{n},
\end{equation}
where the Hamiltonian $H$ is defined from $\left[  0,T\right]  \times
\mathbb{R}^{n}\times A_{1}\times\mathbb{R}^{n}\times\mathcal{M}_{n\times
d}\left(  \mathbb{R}\right)  $ into $\mathbb{R}$ by
\[
H\left(  t,x,v,p,P\right)  =h\left(  t,x,v\right)  +pb\left(  t,x,v\right)
+\sigma\left(  t,x,v\right)  P.
\]

\end{theorem}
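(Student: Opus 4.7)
The plan is to deduce the near maximum principle by applying the strict singular stochastic maximum principle of Bahlali–Mezerdi~$[2]$ to the Ekeland-perturbed control $(u^{n},\xi)$, carrying the extra $\varepsilon_{n}$ error all the way through the expansion. The set-up preceding the theorem already produced $(u^{n},\xi)\in\mathcal{U}$ satisfying the approximate optimality $(12)$ and hence the perturbation bounds $(15)$ and $(16)$, so I only need to (a) introduce the adjoint processes, (b) expand the two perturbations to first (and second) order, and (c) read off the inequalities.

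First I would define $(p^{n},P^{n})$ and $(k^{n},K^{n})$ as the unique adapted solutions of the linear backward SDEs $(17)$ and $(18)$. Existence and uniqueness are standard: under assumption $(4)$ the derivatives $b_{x},\sigma_{x},g_{x},h_{x}$ and $b_{xx},\sigma_{xx},g_{xx},h_{xx}$ are bounded, so the drivers of $(17)$ and $(18)$ are uniformly Lipschitz with square-integrable data, and classical BSDE theory (Pardoux–Peng) gives the claimed processes in $L^{2}$.

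Next, for the variational inequality $(19)$, I would use the spike perturbation $u^{n,\theta}$ defined in $(13)$. Because the diffusion depends on the control, the state expansion needs two variational processes $y^{n,\theta}_{t}$ (first order) and $z^{n,\theta}_{t}$ (second order), driven by the jump $b(t,x^{n}_{t},v)-b(t,x^{n}_{t},u^{n}_{t})$ and the corresponding $\sigma$-jump, as in Peng's setting. Applying Itô's formula to $p^{n}_{t}y^{n,\theta}_{t}+\tfrac{1}{2}k^{n}_{t}(y^{n,\theta}_{t})^{2}+p^{n}_{t}z^{n,\theta}_{t}$ and combining with the expansion of $g$, the singular part drops out (the perturbation leaves $\xi$ untouched) and
\[
J(u^{n,\theta},\xi)-J(u^{n},\xi)=\mathbb{E}\!\int_{\tau}^{\tau+\theta}\!\!\bigl[\mathcal{H}(t,x^{n}_{t},v,p^{n}_{t},P^{n}_{t},k^{n}_{t})-\mathcal{H}(t,x^{n}_{t},u^{n}_{t},p^{n}_{t},P^{n}_{t},k^{n}_{t})\bigr]dt+o(\theta),
\]
where $\mathcal{H}(t,x,v,p,P,k)=H(t,x,v,p,P-k\sigma(t,x,u^{n}_{t}))+\tfrac{1}{2}\mathrm{Tr}[\sigma\sigma^{\ast}(t,x,v)]k$. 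Combining with $(15)$, which yields $J(u^{n,\theta},\xi)-J(u^{n},\xi)\geq -C\varepsilon_{n}\theta$, dividing by $\theta$, and letting $\theta\downarrow 0$ at a Lebesgue point gives the pointwise inequality $(19)$ with the $C\varepsilon_{n}$ slack inherited directly from Ekeland. Actually the statement is in integral form: keeping the expectation and integrating in $t$ over an arbitrary measurable set is what produces $(19)$ as written.

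For $(20)$ I use the convex perturbation $\xi^{\theta}=\xi+\theta(\eta-\xi)$ of $(14)$, which keeps $u^{n}$ fixed. The state difference $x^{n,\theta}-x^{n}$ then satisfies a \emph{linear} SDE with forcing term $G_{t}\,d(\eta-\xi)_{t}$ scaled by $\theta$, so only a first-order expansion is required. Applying Itô to $p^{n}_{t}(x^{n,\theta}_{t}-x^{n}_{t})$, the terminal contribution cancels with $g_{x}(x_{T}^{n})$ and the running $h$-terms cancel with $H_{x}$, leaving
\[
J(u^{n},\xi^{\theta})-J(u^{n},\xi)=\theta\,\mathbb{E}\!\int_{0}^{T}\!(l_{t}+G_{t}^{\ast}p^{n}_{t})\,d(\eta-\xi)_{t}+o(\theta).
\]
Inserting this into $(16)$, dividing by $\theta$ and sending $\theta\downarrow 0$ produces $(20)$, again with the $\varepsilon_{n}$-error surviving intact.

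The main obstacle will be the second-order variational expansion required for $(19)$: since $\sigma$ depends on the control variable, I must carefully track the second-order process $z^{n,\theta}$ together with the quadratic adjoint $k^{n}$, verifying that the $\theta^{2}$ terms from the Taylor expansion of $g$ and $h$, the Itô-correction involving $\sigma_{x}^{\ast}k^{n}\sigma_{x}$, and the cross terms $\sigma_{x}^{\ast}K^{n}+K^{n}\sigma_{x}$ precisely combine into the Hamiltonian with the shift $P^{n}_{t}-k^{n}_{t}\sigma(t,x^{n}_{t},u^{n}_{t})$ and the $\tfrac{1}{2}\mathrm{Tr}[\sigma\sigma^{\ast}]k^{n}$ correction. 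The singular part $G_{t}\,d\xi_{t}$ does not disrupt this because it is unaffected by the spike perturbation, so the bookkeeping reduces to exactly the Peng–Bahlali–Mezerdi computation with an $\varepsilon_{n}$ slack added uniformly.
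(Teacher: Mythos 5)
Your proposal is correct and follows essentially the same route as the paper: the paper's proof consists of the single remark that one starts from the Ekeland inequalities $(15)$ and $(16)$ and repeats the argument of Bahlali--Mezerdi $[2]$ with the index $n$, which is precisely the spike-variation/second-order-adjoint computation for $(19)$ and the convex perturbation of the singular part for $(20)$ that you carry out, with the $C\varepsilon_{n}$ slack inherited from Ekeland's variational principle. Your write-up simply makes explicit the bookkeeping that the paper delegates to $[2]$.
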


\begin{proof}
From inequalities $\left(  15\right)  $ and $\left(  16\right)  $, we use
respectively the same method as in $\left[  2\right]  $ with index $n$.
\end{proof}

\section{The relaxed stochastic maximum principle}

For simplicity, we note by $f\left(  t,x_{t}^{\mu},\mu_{t}\right)
=\int_{A_{1}}f\left(  t,x_{t}^{\mu},a\right)  \mu_{t}\left(  da\right)  $,
where $f$ stands for $b_{x},\sigma_{x},h_{x},H_{x},H_{xx}.$

Let $\left(  \mu,\xi\right)  $ be an optimal relaxed control and $x^{\mu}$ be
the corresponding optimal trajectory. Let $\left(  p^{\mu},P^{\mu}\right)  $
and $\left(  k^{\mu},K^{\mu}\right)  $ be the solutions of the following
backward stochastic differential equations%
\begin{equation}
\left\{
\begin{array}
[c]{l}%
-dp_{t}^{\mu}=H_{x}\left(  x_{t}^{\mu},\mu_{t},p_{t}^{\mu},P_{t}^{\mu}\right)
dt-P_{t}^{\mu}dW_{t},\\
p_{T}^{\mu}=g_{x}(x_{T}^{\mu}),
\end{array}
\right.
\end{equation}%
\begin{equation}
\left\{
\begin{array}
[c]{ll}%
-dk_{t}^{\mu}= & \left[  b_{x}^{\ast}\left(  t,x_{t}^{\mu},\mu_{t}\right)
k_{t}^{\mu}+k_{t}^{\mu}b_{x}\left(  t,x_{t}^{\mu},\mu_{t}\right)  \right]
dt\\
& +\sigma_{x}^{\ast}\left(  t,x_{t}^{\mu},\mu_{t}\right)  k_{t}^{\mu}%
\sigma_{x}\left(  t,x_{t}^{\mu},\mu_{t}\right)  dt\\
& +\left[  \sigma_{x}^{\ast}\left(  t,x_{t}^{\mu},\mu_{t}\right)  K_{t}^{\mu
}+K_{t}^{\mu}\sigma_{x}\left(  t,x_{t}^{\mu},\mu_{t}\right)  \right]  dt\\
& +H_{xx}\left(  x_{t}^{\mu},\mu_{t},p_{t}^{\mu},P_{t}^{\mu}\right)
dt-K_{t}^{\mu}dW_{t},\\
k_{T}^{\mu}= & g_{xx}(x_{T}^{\mu}),
\end{array}
\right.
\end{equation}

\begin{lemma}
We have
\begin{equation}
\underset{n\rightarrow\infty}{\lim}\left(  \mathbb{E}\left[  \underset
{t\in\left[  0,T\right]  }{\sup}\left\vert p_{t}^{n}-p_{t}^{\mu}\right\vert
^{2}\right]  +\mathbb{E}\int_{0}^{T}\left\vert P_{t}^{n}-P_{t}^{\mu
}\right\vert ^{2}ds\right)  =0.
\end{equation}%
\begin{equation}
\underset{n\rightarrow\infty}{\lim}\left(  \mathbb{E}\left[  \underset
{t\in\left[  0,T\right]  }{\sup}\left\vert k_{t}^{n}-k_{t}^{\mu}\right\vert
^{2}\right]  +\mathbb{E}\int_{0}^{T}\left\vert K_{t}^{n}-K_{t}^{\mu
}\right\vert ^{2}ds\right)  =0.
\end{equation}
where $\left(  p_{t}^{n},P_{t}^{n}\right)  $ and $\left(  k_{t}^{n},K_{t}%
^{n}\right)  $ are respectively the solutions of $\left(  17\right)  $ and
$\left(  18\right)  $.
\end{lemma}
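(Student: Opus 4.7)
The plan is a BSDE stability argument, comparing (17) with (19) for $(p^n,P^n)$ vs.\ $(p^\mu,P^\mu)$ and (18) with (20) for $(k^n,K^n)$ vs.\ $(k^\mu,K^\mu)$, fed by the forward $L^2$-convergence $x^n\to x^\mu$ from Lemma 8 and the weak Young-measure convergence $\delta_{u_t^n}(da)\,dt\rightarrow\mu_t(da)\,dt$ from Lemma 7.

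First I would subtract (17) and (19) to get a BSDE for $(\hat p_t,\hat P_t):=(p_t^n-p_t^\mu,P_t^n-P_t^\mu)$ with terminal value $\hat p_T=g_x(x_T^n)-g_x(x_T^\mu)$ and driver
$$H_x(x_t^n,u_t^n,p_t^n,P_t^n)-H_x(x_t^\mu,\mu_t,p_t^\mu,P_t^\mu).$$
Since $H=h+pb+\sigma P$ is affine in $(p,P)$ and $b_x,\sigma_x,h_x$ are uniformly bounded, this driver is Lipschitz in $(\hat p,\hat P)$ with a uniform constant. Applying It\^o's formula to $|\hat p_t|^2$, the Burkholder--Davis--Gundy inequality, Young's inequality to absorb the $|\hat P|$-term on the left, and Gronwall's lemma, I arrive at the standard a priori estimate
$$\mathbb{E}\Bigl[\sup_{t\in[0,T]}|\hat p_t|^2\Bigr]+\mathbb{E}\int_0^T|\hat P_s|^2\,ds\;\leq\;C\,\mathbb{E}\,|g_x(x_T^n)-g_x(x_T^\mu)|^2+C\,\mathbb{E}\int_0^T |R_s|^2\,ds,$$
where $R_s:=H_x(x_s^n,u_s^n,p_s^\mu,P_s^\mu)-H_x(x_s^\mu,\mu_s,p_s^\mu,P_s^\mu)$ is the frozen forcing.

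Next I would show the right-hand side tends to zero. The terminal term vanishes by continuity and uniform boundedness of $g_x$ combined with $x_T^n\to x_T^\mu$ in $L^2(\Omega)$ (Lemma 8) via dominated convergence. For the forcing I would split $R_s=A_s+B_s$ with
$$A_s=H_x(x_s^n,u_s^n,p_s^\mu,P_s^\mu)-H_x(x_s^\mu,u_s^n,p_s^\mu,P_s^\mu),\quad B_s=H_x(x_s^\mu,u_s^n,p_s^\mu,P_s^\mu)-H_x(x_s^\mu,\mu_s,p_s^\mu,P_s^\mu).$$
The piece $A_s$ goes to zero in $L^2(dt\otimes d\mathcal{P})$ by joint continuity of $h_x,b_x,\sigma_x$ in $(x,v)$, the domination $|A_s|\le C(1+|p_s^\mu|+|P_s^\mu|)$ which is square-integrable, and the forward convergence of Lemma 8. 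The control-dependent piece $B_s$ is the analytic heart of the argument: using the affine form $H_x=h_x+pb_x+P\sigma_x$ one writes $B_s$ as a bounded, continuous-in-$a$ integrand against $\delta_{u_s^n}-\mu_s$, modulated by the square-integrable processes $p^\mu,P^\mu$; approximating $(p^\mu,P^\mu)$ by simple predictable processes (on which the Chattering Lemma applies directly to a genuinely continuous-bounded test function of $(s,a)$) and then extending by density yields the required decay. This is precisely the step performed in Bahlali--Mezerdi--Djehiche~[3, Lemma 4.2], and it transfers verbatim to the present setting because the singular component $\xi$ does not enter the adjoint equation.

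The second pair $(\hat k,\hat K)$ is treated in the same way via the difference of (18) and (20). The new driver pieces --- the $H_{xx}$ difference, plus the Lyapunov-type matrix terms $b_x^\ast k+kb_x$, $\sigma_x^\ast k\sigma_x$, and $\sigma_x^\ast K+K\sigma_x$ --- remain Lipschitz in $(\hat k,\hat K)$ with uniformly bounded constants, so the same BSDE stability inequality holds. The frozen forcing is then decomposed into an $x$-continuous part controlled by Lemma 8, a control-dependent part controlled exactly as the $B_s$ above (now modulated by $k^\mu,K^\mu$ instead of $p^\mu,P^\mu$), and a part arising through $H_{xx}(x^n,u^n,p^n,P^n)-H_{xx}(x^\mu,\mu,p^\mu,P^\mu)$; this last piece is handled by the same splitting and additionally uses the already-established convergence (22). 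The main obstacle throughout is the $B_s$-type step: converting the Young-measure convergence of Lemma 7 into $L^2(dt\otimes d\mathcal{P})$ decay. The linearity of $H_x$ and $H_{xx}$ in $(p,P)$ is what makes this possible.
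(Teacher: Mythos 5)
The paper disposes of this lemma with a one--line citation of [3, Lemma 4.11], so the comparison has to be made with the argument used there, and your plan diverges from it at exactly the step you yourself flag as the analytic heart --- and it diverges in a way that does not work. The a priori estimate you invoke bounds $\mathbb{E}\bigl[\sup_{t}|\hat p_t|^2\bigr]+\mathbb{E}\int_0^T|\hat P_s|^2\,ds$ by $C\,\mathbb{E}|\hat p_T|^2+C\,\mathbb{E}\int_0^T|R_s|^2\,ds$, and you then need $\mathbb{E}\int_0^T|B_s|^2\,ds\to 0$. That is false in general: the chattering lemma gives only weak convergence of the measures $\delta_{u_s^n}(da)\,ds$ on $[0,T]\times A_1$, i.e.\ convergence of the time integrals $\int_0^T\int_{A_1}g(s,a)\,\delta_{u_s^n}(da)\,ds$ for bounded continuous $g$, not convergence of the integrand $s\mapsto\int_{A_1}g(s,a)\,(\delta_{u_s^n}-\mu_s)(da)$ in $L^2(ds)$. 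Concretely, take $A_1=\{-1,1\}$, $\mu_s=\tfrac12(\delta_{-1}+\delta_{1})$ and $u^n$ the oscillating sequence of Example 1; if $b_x(s,x,1)\neq b_x(s,x,-1)$ then $|B_s|$ is bounded away from $0$ for a.e.\ $s$ and $\int_0^T|B_s|^2\,ds$ does not vanish, even though $\int_0^T B_s\phi_s\,ds\to 0$ for every continuous $\phi$. Approximating $(p^\mu,P^\mu)$ by simple processes cannot repair this, since the failure already occurs for $p^\mu\equiv 1$, $P^\mu\equiv 0$; and passing to the $\bigl(\int_0^T|R_s|\,ds\bigr)^2$ variant of the estimate does not help either, because the absolute value destroys precisely the cancellation that weak convergence provides.

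The way around this, which is what the argument of [3, Lemma 4.11] relies on, is to avoid measuring the driver difference in a strong norm altogether: one uses the explicit duality representation of the solutions of these linear adjoint BSDEs in terms of the fundamental solution $\Phi$ of the linearized state equation (and its second--order analogue for $(k,K)$), so that the control--dependent discrepancy appears only inside genuine time integrals of the form $\int_t^T\int_{A_1}(\cdots)\,\bigl(\delta_{u_s^n}-\mu_s\bigr)(da)\,ds$ tested against processes continuous in $s$ --- exactly the shape of functional to which the chattering lemma applies --- while the dependence through $\Phi^n$ is a forward SDE stability statement of the same type as Lemma 8. The convergence of $P^n$ to $P^\mu$ in $L^2(dt\otimes d\mathcal{P})$ then comes from $L^2$--convergence of the conditional--expectation representations, not from an $L^2(ds)$ bound on $R$. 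Your treatment of the terminal term and of the piece $A_s$ is correct, and your observation that the singular component does not enter the adjoint equations is also correct; the gap is the reduction of the whole problem to strong $L^2(dt\otimes d\mathcal{P})$ convergence of the forcing, which the available mode of convergence of $u^n$ cannot deliver.
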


\begin{proof}
We have the same proof that in Bahlali-Mezerdi-Djehiche $\left[  3,\text{
Lemma 4.11, page 19}\right]  $.
\end{proof}

\ 

We can now state the relaxed maximum principle in integral form.

\begin{theorem}
(The relaxed maximum principle in integral form). \textit{Let }$\left(
\mu,\xi\right)  $ \textit{be an optimal relaxed control minimizing the cost
}$J$\textit{\ over }$\mathcal{R}$ \textit{and }$x_{t}^{\mu}$\textit{\ the
corresponding optimal trajectory. there exists two unique couples of adapted
processes}%
\begin{align*}
\left(  p^{\mu},P^{\mu}\right)   &  \in L^{2}\left(  \left[  0,T\right]
;\mathbb{R}^{n}\right)  \times L^{2}\left(  \left[  0,T\right]  ;\mathbb{R}%
^{n\times d}\right)  ,\\
\left(  k^{\mu},K^{\mu}\right)   &  \in L^{2}\left(  \left[  0,T\right]
;\mathbb{R}^{n\times n}\right)  \times\left(  L^{2}\left(  \left[  0,T\right]
;\mathbb{R}^{n\times n}\right)  \right)  ^{d},
\end{align*}
\textit{which are respectively solutions of backward stochastic differential
equations }$\left(  21\right)  $\textit{\ and }$\left(  22\right)  $
\textit{such that}%
\begin{equation}
\mathcal{H}\left(  t,x_{t}^{\mu},\mu_{t},p^{\mu},P^{\mu},k^{\mu},K^{\mu
}\right)  =\underset{q\in\mathbb{P}\left(  A_{1}\right)  }{\inf}%
\mathcal{H}\left(  t,x_{t}^{\mu},q,p^{\mu},P^{\mu},k^{\mu},K^{\mu}\right)  ,
\end{equation}%
\begin{equation}
0\leq\mathbb{E}\int_{0}^{T}\left[  l_{t}+G_{t}^{\ast}p_{t}^{\mu}\right]
d\left(  \eta-\xi\right)  _{t}.
\end{equation}

Where
\begin{align*}
\mathcal{H}\left(  t,x_{t}^{\mu},\mu_{t},p^{\mu},P^{\mu},k^{\mu},K^{\mu
}\right)   &  =H\left(  t,x_{t}^{\mu},\mu_{t},p_{t}^{\mu},P_{t}^{\mu}%
-k_{t}^{\mu}\sigma\left(  t,x_{t}^{\mu},\mu_{t}\right)  \right) \\
&  +%
\genfrac{.}{.}{}{0}{1}{2}%
Tr\left[  \sigma\sigma^{\ast}\left(  t,x_{t}^{\mu},\mu_{t}\right)  \right]
k_{t}^{\mu}.
\end{align*}

\end{theorem}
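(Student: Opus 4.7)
The strategy is to approximate the relaxed optimum by a sequence of strict controls, apply the near maximum principle of Theorem 13 to this sequence, and pass to the limit using the stability results for the trajectories (Lemma 8) and the adjoint processes (Lemma 15).

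First, the chattering lemma applied to $(\mu,\xi)$ produces strict controls $v^n\in U_{1}$ such that $\delta_{v_{t}^{n}}(da)\,dt$ converges weakly to $\mu_{t}(da)\,dt$, $\mathcal{P}$-a.s. By Lemma 8 and Remark 9, the pair $(v^{n},\xi)$ is $\varepsilon_{n}$-optimal in $\mathcal{U}$ for some $\varepsilon_{n}\downarrow 0$. Ekeland's variational principle (Lemma 10) applied on the complete metric space $(\mathcal{U},d)$ then yields a perturbed sequence $(u^{n},\xi)\in\mathcal{U}$ fulfilling (12). For each $n$, Theorem 13 provides adjoint processes $(p^{n},P^{n})$ and $(k^{n},K^{n})$ solving (17)--(18), together with the near optimality inequalities (19) and (20). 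The target adjoints $(p^{\mu},P^{\mu})$ and $(k^{\mu},K^{\mu})$ are defined as the unique solutions of the linear BSDEs (21)--(22), which exist by standard theory since the coefficients are uniformly Lipschitz and the terminal values are square-integrable under assumption (4).

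The passage to the limit in the singular part is the easy step: by Lemma 15, $p^{n}\to p^{\mu}$ in $L^{2}(\Omega;C([0,T]))$, and since $G$ is bounded, $l$ is continuous and $\eta-\xi$ has square-integrable total variation, passing to the limit $n\to\infty$ in (20) together with $\varepsilon_{n}\to 0$ yields (24).

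The main technical step is to extract (23) from the pointwise inequality (19). Fix an arbitrary $q\in\mathbb{P}(A_{1})$; integrating (19) against $q(da)$ and then taking expectation and integrating in time produces an inequality of the form
$$\mathbb{E}\int_{0}^{T}\Phi_{n}(t,u_{t}^{n})\,dt\leq\mathbb{E}\int_{0}^{T}\int_{A_{1}}\Phi_{n}(t,a)\,q(da)\,dt+CT\varepsilon_{n},$$
where $\Phi_{n}$ is built from $x^{n},p^{n},P^{n},k^{n}$. The weak convergence of $\delta_{u_{t}^{n}}(da)\,dt$ to $\mu_{t}(da)\,dt$ handles the explicit $u^{n}$-dependence in the slot of $H$ that plays the role of the measure, while the $L^{2}$ convergences $x^{n}\to x^{\mu}$, $p^{n}\to p^{\mu}$, $P^{n}\to P^{\mu}$, $k^{n}\to k^{\mu}$ from Lemmas 8 and 15 dispose of the coefficients, using the uniform bounds and continuity in (4) and (8). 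The delicate obstruction is the quadratic cross term $\sigma(t,x^{n},u^{n})\,k^{n}\,\sigma(t,x^{n},u^{n})$ concealed inside $H(t,x^{n},u^{n},p^{n},P^{n}-k^{n}\sigma(t,x^{n},u^{n}))$: it is nonlinear in the relaxation variable and is not captured by a single integration against $\delta_{u_{t}^{n}}$. It is handled by splitting one $\sigma(t,x^{n},u^{n})$ factor off using the $L^{2}$ convergence of $k^{n}$ and the $L^{\infty}$ bound on $\sigma$, and then invoking weak convergence on the remaining linear factor, along the lines of Bahlali-Mezerdi-Djehiche [3, Theorem 4.12]. Since $q\in\mathbb{P}(A_{1})$ was arbitrary, one arrives at the relaxed minimum condition (23).
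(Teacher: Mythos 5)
Your proposal follows exactly the route the paper takes: chattering approximation of $(\mu,\xi)$, Ekeland's variational principle on $(\mathcal{U},d)$ to get the near maximum principle for $(u^{n},\xi)$, and then passage to the limit in $(19)$--$(20)$ using the stability of the trajectories and of the adjoint processes. In fact you supply more detail than the paper, whose proof declares the limit ``immediate''; your discussion of the quadratic $\sigma k^{n}\sigma$ term, which is genuinely nonlinear in the relaxation variable and is the one delicate point in deriving $(23)$, is a correct and welcome elaboration consistent with the treatment in Bahlali--Mezerdi--Djehiche $\left[3\right]$.
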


\begin{proof}
Let $\left(  \mu,\xi\right)  $ be an optimal relaxed control. From theorem 12,
there exists a sequence $\left(  u^{n},\xi\right)  $ in $\mathcal{U}$ such
that for all $\left(  v,\eta\right)  \in\mathcal{U}$, the variational
equations $\left(  19\right)  $ and $\left(  20\right)  $ holds. Then by using
$\left(  10\right)  ,\ \left(  23\right)  $, $\left(  24\right)  $ and by
letting $n $ go to infinity, the results follows immediately.
\end{proof}

\ 

We are ready now state the main result of this paper, which is the relaxed
stochastic maximum principle for singular control problems in its global form.

\begin{theorem}
(The relaxed maximum principle in global form). \textit{Let }$\left(  \mu
,\xi\right)  $ \textit{be an optimal control minimizing the functional cost
}$J$ over $\mathcal{R}$\textit{\ and }$x_{t}^{\mu}$\textit{\ the trajectory of
the system controlled by }$\left(  \mu,\xi\right)  $\textit{. Then, there
exist two unique couples of adapted processes}%
\begin{align*}
\left(  p^{\mu},P^{\mu}\right)   &  \in L^{2}\left(  \left[  0,T\right]
;\mathbb{R}^{n}\right)  \times L^{2}\left(  \left[  0,T\right]  ;\mathbb{R}%
^{n\times d}\right)  ,\\
\left(  k^{\mu},K^{\mu}\right)   &  \in L^{2}\left(  \left[  0,T\right]
;\mathbb{R}^{n\times n}\right)  \times\left(  L^{2}\left(  \left[  0,T\right]
;\mathbb{R}^{n\times n}\right)  \right)  ^{d},
\end{align*}
\textit{which are respectively solutions of backward stochastic differential
equations }$\left(  21\right)  $\textit{\ and }$\left(  22\right)  $
\textit{such that}%
\begin{equation}
\mathcal{H}\left(  t,x_{t}^{\mu},\mu_{t},p^{\mu},P^{\mu},k^{\mu},K^{\mu
}\right)  =\underset{q\in\mathbb{P}\left(  A_{1}\right)  }{\inf}%
\mathcal{H}\left(  t,x_{t}^{\mu},q,p^{\mu},P^{\mu},k^{\mu},K^{\mu}\right)  ,
\end{equation}%
\begin{equation}
\mathcal{P}\left\{  \forall t\in\left[  0,T\right]  ,\;\forall i\;;\;l_{i}%
\left(  t\right)  +G_{i}^{\ast}\left(  t\right)  .p_{t}^{\mu}\geq0\right\}
=1,
\end{equation}%
\begin{equation}
\mathcal{P}\left\{  \sum_{i=1}^{m}\mathbf{1}_{\left\{  l_{i}\left(  t\right)
+G_{i}^{\ast}\left(  t\right)  .p_{t}^{\mu}\geq0\right\}  }d\xi_{t}%
^{i}=0\right\}  =1.
\end{equation}

\end{theorem}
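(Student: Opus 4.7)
The plan is to derive Theorem 15 from the integral-form Theorem 14 by specializing the perturbation $\eta\in U_{2}$ in inequality (24). The Hamiltonian condition (25) coincides verbatim with (23), so the substantive task is to convert (24) into the pointwise conditions (26) and (27).

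For (26), I would test (24) against perturbations of the form $\eta^{j}_{t}=\xi^{j}_{t}+\delta_{ij}\mathbf{1}_{A}\,\alpha_{t}$, where $A\in\mathcal{F}_{s}$ and $\alpha$ is a nondecreasing, left-continuous scalar process with $\alpha_{0}=0$. Such $\eta$ is admissible and yields
\[
\mathbb{E}\int_{0}^{T}\mathbf{1}_{A}\bigl[l_{i}(t)+G_{i}^{\ast}(t)\,p_{t}^{\mu}\bigr]\,d\alpha_{t}\;\geq\;0.
\]
Taking $\alpha_{t}=(t-s)^{+}\wedge\varepsilon$ and letting $\varepsilon\to 0$, Lebesgue differentiation with respect to $dt\otimes d\mathcal{P}$ gives $\mathbb{E}[\mathbf{1}_{A}(l_{i}(s)+G_{i}^{\ast}(s)p_{s}^{\mu})]\geq 0$ for a.e.\ $s$; arbitrariness of $A\in\mathcal{F}_{s}$ then yields $l_{i}(s)+G_{i}^{\ast}(s)p_{s}^{\mu}\geq 0$, $\mathcal{P}$-a.s. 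Since $p^{\mu}$ is continuous (the BSDE (21) is driven by $W$ alone) and $l$, $G$ are continuous in $t$ by (4), the process $t\mapsto l_{i}(t)+G_{i}^{\ast}(t)p_{t}^{\mu}$ has continuous sample paths, and a separability argument on a dense countable set of times promotes the a.s.\ statement to one holding uniformly in $t$ and $i$, yielding (26).

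For (27), I would exploit the linearity of (24) in $\eta$ by testing it with the two admissible choices $\eta=2\xi$ and $\eta=0$. These give respectively
\[
\mathbb{E}\int_{0}^{T}[l_{t}+G_{t}^{\ast}p_{t}^{\mu}]\,d\xi_{t}\geq 0\quad\text{and}\quad -\mathbb{E}\int_{0}^{T}[l_{t}+G_{t}^{\ast}p_{t}^{\mu}]\,d\xi_{t}\geq 0,
\]
whence $\mathbb{E}\int_{0}^{T}[l_{t}+G_{t}^{\ast}p_{t}^{\mu}]\,d\xi_{t}=0$. Combined with (26), which makes the integrand componentwise nonnegative, and with the fact that each $d\xi^{i}$ is a positive random measure, this forces $[l_{i}(t)+G_{i}^{\ast}(t)p_{t}^{\mu}]\,d\xi_{t}^{i}\equiv 0$, $\mathcal{P}$-a.s., for every $i$. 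Equivalently, $d\xi^{i}$ is supported on the zero set of $l_{i}+G_{i}^{\ast}p^{\mu}$, which is the complementary-slackness statement (27).

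The main obstacle is the passage in Step 1 from an a.s.\ statement at each fixed $s$ to the joint pathwise statement $\mathcal{P}\{\forall t,\,\forall i,\,l_{i}(t)+G_{i}^{\ast}(t)p_{t}^{\mu}\geq 0\}=1$, which formally involves an uncountable union of null sets; this is handled cleanly only because the continuity of $p^{\mu}$, $l$ and $G$ reduces the verification to a countable dense set of times.
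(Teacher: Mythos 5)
Your argument is correct and follows essentially the same route as the paper: the Hamiltonian condition is carried over verbatim from the integral form, and the two pointwise conditions on the singular part are obtained by specializing the test process $\eta$ in the integral variational inequality (the ramp/indicator localization for nonnegativity, and the choices $\eta=0$, $\eta=2\xi$ for complementary slackness). The paper simply outsources this step with the phrase ``proved exactly as in $[7]$'' (Cadenillas--Haussmann); you have written out the standard argument that citation contains, including the continuity point needed to upgrade the a.e.\ statement to one holding for all $t$ almost surely.
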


\begin{proof}
From $\left(  25\right)  $, we deduce immediately $\left(  27\right)  $ and
from $\left(  26\right)  $, assertions $\left(  28\right)  $ and $\left(
29\right)  $ are proved exactly as in $\left[  7\right]  $.
\end{proof}

\begin{remark}
1) If $G=l=0$, we recover the relaxed stochastic maximum principle for
classical controls, see Bahlali-Mezerdi-Djehiche $\left[  3\right]  $.

2) If $\mu_{t}(da)=\delta_{u(t)}(da)$, we recover the strict singular
stochastic maximum principle established by Bahlali-Mezerdi $\left[  2\right]
.$

3) If $\mu_{t}(da)=\delta_{u(t)}(da)$ and $G=l=0$, we obtain Peng's stochastic
maximum principle $\left[  22\right]  $.

4) If the diffusion coefficient $\sigma$ does not contain the measure-valued
part, we recover the result of Bahlali-Djehiche-Mezerdi $\left[  4\right]  .$
\end{remark}

\ 

\textbf{Acknowledgement. }The authors would like to thank the referees for
valuable remarks and suggestions that improved the first version of the paper.

\end{document}